\documentclass[12pt]{article}
\usepackage{amsfonts,amssymb,amscd,amsmath,enumerate,verbatim,calc, times,url}
\usepackage{amsthm, psfrag,latexsym,epsfig,mdwlist,graphicx}

\textwidth6in
\textheight8in
\topmargin-.5in
\evensidemargin0in
\oddsidemargin0in

\theoremstyle{plain}
\newtheorem{theorem}{Theorem}%[section]
\newtheorem{lemma}[theorem]{Lemma}

\theoremstyle{definition}
\newtheorem{definition}[theorem]{Definition}

%\parskip.05in
%\renewcommand{\baselinestretch}{1.85}

%%%%%%%%%%%%%%%%%%%%%% SARA'S MACROS %%%%%%%%%%%%%%%%%%%%%%%%%%%%
                %x_1,...,x_n
\newcommand{\vs}{v_1,\ldots,v_n}                %v_1,...,v_n
                %M_1,...,M_q
                %F_1,...,F_q

             %depth
                 %maximal ideal
                  %max
           %height

% remove a facet 
                %Koszul complex A
                  %Koszul complex K 
                  %Koszul complex L
                  %Facet ideal of a complex
       
                  %Hypergraph
                  %Universal facet of a leaf
%\newcommand{\HH}{{\mathop{\rm{H}}}}             %Homology module
               %poly ring mod facet ideal
               %poly ring mod non-face ideal
\newcommand{\D}{\Delta}                         %Delta
                      %Delta_N
                         %vertex covering number
                          %independence number
          %facet  complex of an ideal
          %non-facet  complex of an ideal
          %least common multiple
              
\newcommand{\st}{\ | \ }                        % such that: | 
\newcommand{\tuple}[1]{\langle #1 \rangle}      % < ... >
               % { ... }
        % remove a facet
          % remove a facet, facet complex
%\renewcommand{\today}{November 7, 2002}
\newcommand{\void}[1]{}
             %prime ideal
             %prime/primary ideal
             %maximal ideal

\newcommand{\erase}[1]{}

\newcommand{\lk}{{\rm{lk}}\ }                   %link
%%%%%%%%%%%%%%%%%%%%%% MASSIMO'S MACROS %%%%%%%%%%%%%%%%%%%%%%%%%%%%

\newcommand{\cocoa}{\mbox{\rm C\kern-.13em o\kern-.07 em C\kern-.13em o\kern-.15em A}} 
\newcommand{\cocoax}{\mbox{C\kern-.13em o\kern-.07 em C\kern-.13em o\kern-.15em A}} 
\newcommand{\cocoal}{\mbox{\rm C\kern-.13em o\kern-.07 em C\kern-.13em o\kern-.15emA\kern-.1em L}}

\newcommand{\todo}[1]{\vspace{5 mm}\par \noindent
\marginpar{\textsc{ToDo}}
\framebox{\begin{minipage}[c]{0.95 \textwidth}
\tt #1 \end{minipage}}\vspace{5 mm}\par}

\renewcommand{\todo}[1]{}

\newcommand{\idiot}[1]{\vspace{5 mm}\par \noindent
%\marginpar{\textsc{Note}}
\framebox{\begin{minipage}[c]{0.95 \textwidth}
\tt #1 \end{minipage}}\vspace{5 mm}\par}

\renewcommand{\idiot}[1]{}

%%%%%%%%%%%%%%%%%%% PETER'S MACROS %%%%%%%%%%%%%%%%%%%%%%%%%%%%%%%%%%%%%%%%%

%\newcommand{\inter}{\cap}  % intersection
\newcommand{\sm}{\setminus}

             % empty set
              % {a,b}
            % |A|: elements of algebra
                 % (a,b): tuple (not set!)
           % in sets {x|...}

\date{}

\author{Sara Faridi\thanks{Department of
Mathematics and Statistics, Dalhousie University, Halifax, Canada, 
faridi@mathstat.dal.ca, +1(902)-494-2658. 
Research supported by NSERC.}}

\title{\Large \sc The Betti numbers of Stanley-Reisner ideals of
  simplicial trees}
\begin{document}

\maketitle
\begin{abstract} We provide a simple method to compute the Betti
  numbers of the Stanley-Reisner ideal of a simplicial tree and its
  Alexander dual.  
 \end{abstract}

Keywords: resolution, monomial ideal, simplicial tree, Stanley-Reisner ideal

\bigskip

%%%%%%%%%%%%%%%%%%%%%%%%%%%%%%%%%%%%%%%%%%%%%%%%%%%%%%%%%%%%%
%%%%%%%%%%%%%%%%%%%%%%%%%%%%%%%%%%%%%%%%%%%%%%%%%%%%%%%%%%%%
%\section{Introduction} 
 
Simplicial trees~\cite{F1} are a class of flag complexes initially
studied for the properties of their facet ideals. In this short note
we give a short and straightforward method to compute the Betti
numbers of their Stanley-Reisner ideals. 

The \emph{Betti numbers} of a homogeneous ideal $I$ in a polynomial
ring $R$ over a field are the ranks of the free modules appearing in a minimal free resolution 
$$0\rightarrow {\displaystyle
  \oplus_{d}}R(-d)^{\beta_{p,d}}\rightarrow\cdots{\displaystyle
  \rightarrow \oplus_{d}}R(-d)^{\beta_{0,d}}\rightarrow I \rightarrow
0$$ of $I$. Here $R(-d)$ denotes the graded free module obtained by
shifting the degrees of elements in $R$ by $d$. The numbers
$\beta_{i,d}$, which we shall refer to as the $i$th
$\mathbb{N}$-\emph{graded Betti numbers} of degree $d$ of $I$, are
independent of the choice of the graded minimal finite free resolution.

 \begin{definition}[simplicial complex] 
   A \emph{simplicial complex} $\D$ over a set of vertices $V=\{ \vs
   \}$ is a collection of subsets of $V$, with the property that $\{
   v_i \} \in \D$ for all $i$, and if $F \in \D$ then all subsets of
   $F$ are also in $\D$. An element of $\D$ is called a \emph{face} of
   $\D$.  The maximal faces of $\D$ under inclusion are called
   \emph{facets} of $\D$.  A \emph{subcollection} of $\D$ is a
   simplicial complex whose facets are also facets of $\D$; in other
   words a simplicial complex generated by a subset of the set of
   facets of $\D$. $A \subseteq V$, the \emph{induced subcomplex of $\D$ on
     $A$}, denoted by $\D_A$, is defined as $\D_A=\{F\in \D \st
   F\subseteq A \}.$
 \end{definition}

\begin{definition} Let $\D$  be a simplicial complex with vertex set
$x_1,\ldots,x_n$ and $R=k\left[x_1,\dots,x_n\right]$ be a polynomial
  ring over a field $k$.  The \emph{Stanley-Reisner  ideal} of $\D$
  is defined as $I_{\D}=( \prod_{x_i \in F}x_i \st F \notin \D).$
\end{definition}

\begin{definition}[\cite{F1} leaf, joint] A facet $F$ of a 
simplicial complex is called a \emph{leaf} if either $F$ is the only
facet of $\D$ or for some facet $G\neq F$ of $\D$ we have $F \cap H
\subseteq G$ for all other facets $H$ of $\D$.  Such a facet $G$ is
called a \emph{joint} of $F$.
\end{definition}

\begin{definition}[\cite{F1} tree, forest]\label{d:tree} A connected 
simplicial complex $\D$ is a \emph{tree} if every nonempty
subcollection of $\D$ has a leaf.  If $\D$ is not necessarily
connected, but every subcollection has a leaf, then $\D$ is called a
\emph{forest}.
\end{definition}

\begin{theorem}[\cite{F2}~Theorem~2.5]\label{t:induced} An induced 
  subcomplex of a simplicial tree is a simplicial forest.\end{theorem}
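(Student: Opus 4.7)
The plan is to show that every nonempty subcollection $\Sigma$ of $\Delta_A$ has a leaf by lifting $\Sigma$ to a subcollection of $\Delta$ and transferring the leaf back. First I would record the basic observation that each facet $F$ of $\Delta_A$ equals $\tilde{F}\cap A$ for some facet $\tilde{F}$ of $\Delta$: indeed $F$ sits inside some facet $\tilde{F}$ of $\Delta$, and $\tilde{F}\cap A$ is a face of $\Delta_A$ containing $F$, so maximality of $F$ in $\Delta_A$ forces the equality. For each facet $F$ of $\Sigma$, fix one such $\tilde{F}$. Note that this choice is automatically injective, because $\tilde{F}=\tilde{H}$ would force $F=\tilde{F}\cap A=\tilde{H}\cap A=H$.

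Next I would let $\tilde{\Sigma}$ denote the subcollection of $\Delta$ generated by the chosen facets $\{\tilde{F}\}$. Since $\Delta$ is a tree, $\tilde{\Sigma}$ has a leaf $\tilde{F}_0$. The facets of $\tilde{\Sigma}$ are the maximal members of the generating set $\{\tilde{F}\}$, so $\tilde{F}_0=\tilde{F}_0'$ for some facet $F_0$ of $\Sigma$ (with $F_0=\tilde{F}_0\cap A$). If $\tilde{F}_0$ is the only facet of $\tilde{\Sigma}$, then every other $\tilde{H}$ is contained in $\tilde{F}_0$, so $H=\tilde{H}\cap A\subseteq F_0$; by maximality in $\Sigma$ this forces $H=F_0$, and hence $\Sigma$ has only the single facet $F_0$, which is trivially a leaf.

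Otherwise $\tilde{F}_0$ has a joint $\tilde{G}_0$ in $\tilde{\Sigma}$, and again $\tilde{G}_0$ is one of the chosen lifts, say $\tilde{G}_0=\tilde{G_0}$ with $G_0=\tilde{G}_0\cap A$ a facet of $\Sigma$. I would then verify that $F_0$ is a leaf of $\Sigma$ with joint $G_0$. Pick any facet $H\neq F_0$ of $\Sigma$; its lift $\tilde{H}$ is different from $\tilde{F}_0$ and lies inside some facet $\tilde{H}'$ of $\tilde{\Sigma}$. One must check $\tilde{H}'\neq \tilde{F}_0$: otherwise $H=\tilde{H}\cap A\subseteq \tilde{F}_0\cap A=F_0$, forcing $H=F_0$. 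Hence the leaf property of $\tilde{F}_0$ gives $\tilde{F}_0\cap \tilde{H}'\subseteq \tilde{G}_0$, so
\[
F_0\cap H = (\tilde{F}_0\cap A)\cap(\tilde{H}\cap A)\subseteq \tilde{F}_0\cap \tilde{H}'\cap A\subseteq \tilde{G}_0\cap A = G_0,
\]
as required.

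The main obstacle I anticipate is precisely the bookkeeping around the lifted subcollection $\tilde{\Sigma}$: a priori the chosen lifts $\tilde{F}$ need not all be facets of $\tilde{\Sigma}$, and one must ensure that the leaf/joint pair produced by the tree property of $\Delta$ actually descends to a facet/joint pair of $\Sigma$. The two observations above --- that distinct facets of $\Sigma$ have distinct lifts, and that no lift $\tilde{H}$ can be sandwiched inside $\tilde{F}_0$ without collapsing $H$ onto $F_0$ --- are what make the descent work.
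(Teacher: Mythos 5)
Your argument is correct. The paper itself does not prove this statement --- it is quoted from \cite{F2} (Theorem~2.5) --- but your lifting argument is essentially the standard one: every facet $F$ of $\Delta_A$ has the form $\tilde F\cap A$ for a facet $\tilde F$ of $\Delta$, distinct facets get distinct lifts, and a leaf/joint pair of the lifted subcollection intersects with $A$ to give a leaf/joint pair downstairs. One small simplification you could make: since the chosen lifts are distinct facets of the \emph{ambient} complex $\Delta$, they are pairwise incomparable, so they are automatically exactly the facets of $\tilde\Sigma$; the extra bookkeeping with $\tilde H'$ (the facet of $\tilde\Sigma$ containing $\tilde H$) is therefore unnecessary, as $\tilde H'=\tilde H$ always. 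With that observation the descent is immediate: $F_0\cap H=\tilde F_0\cap\tilde H\cap A\subseteq\tilde G_0\cap A=G_0$, and $G_0\neq F_0$ by injectivity of the lift.
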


\begin{definition}[link] Let  $\D$ 
be a simplicial complex over a vertex set $V$ and let $F$ be a face of
$\D$. The \emph{link} of $F$ is defined as $\lk_\D(F)=\{G \in \D \st F
\cap G = \emptyset \ \& \ F \cup G \in \D \}.$
\end{definition}

\begin{lemma}[A link in a tree is a forest]\label{l:links}
 If $\D$ is a tree and $F$ is a face of $\D$, then $\lk_\D(F)$ is a
 forest.
\end{lemma}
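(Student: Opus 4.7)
The plan is to show directly that every nonempty subcollection of $\lk_\D(F)$ has a leaf, by lifting each subcollection to one in $\D$ and invoking the tree hypothesis.

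The starting observation is a natural correspondence between the facets of $\lk_\D(F)$ and the facets of $\D$ containing $F$. Indeed, if $G$ is a facet of $\lk_\D(F)$, then $H := F \cup G \in \D$ and maximality of $G$ in $\lk_\D(F)$ forces $H$ to be a facet of $\D$ (any vertex $v \notin H$ satisfies $v \notin F$, so adding $v$ to $G$ violates maximality in $\lk_\D(F)$ iff $H \cup \{v\} \notin \D$). Conversely, for any facet $H$ of $\D$ with $F \subseteq H$, the set $H \setminus F$ is a facet of $\lk_\D(F)$. This correspondence is a bijection, and two distinct facets $H \neq H'$ (both containing $F$) yield distinct $H \setminus F \neq H' \setminus F$.

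Now let $\Sigma$ be a nonempty subcollection of $\lk_\D(F)$ with facets $G_1,\ldots,G_k$, and let $H_i = F \cup G_i$ be the corresponding facet of $\D$. Let $\Sigma'$ be the subcollection of $\D$ generated by $H_1,\ldots,H_k$. Since $\D$ is a tree, $\Sigma'$ has a leaf $H_i$. If $H_i$ is the only facet of $\Sigma'$, then $G_i$ is the only facet of $\Sigma$ and is a leaf by definition. Otherwise there is a joint $H_j$ with $j \neq i$ such that $H_i \cap H_\ell \subseteq H_j$ for every $\ell \neq i$. Intersecting both sides with the complement of $F$ gives
$$G_i \cap G_\ell = (H_i \setminus F) \cap (H_\ell \setminus F) = (H_i \cap H_\ell) \setminus F \subseteq H_j \setminus F = G_j,$$
so $G_j$ is a joint of $G_i$ in $\Sigma$ and $G_i$ is a leaf of $\Sigma$. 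Thus every subcollection of $\lk_\D(F)$ has a leaf, and so $\lk_\D(F)$ is a forest.

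The argument is essentially bookkeeping, so there is no single hard step; the only care required is the equivalence between facets of the link and facets of $\D$ containing $F$ (together with the observation that the bijection $H \leftrightarrow H \setminus F$ respects containment of pairwise intersections, since $F$ sits inside every $H_i$). The degenerate case where $F$ is itself a facet makes $\lk_\D(F) = \{\emptyset\}$, which is trivially a forest.
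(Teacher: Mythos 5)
Your proof is correct and follows essentially the same route as the paper's: lift a subcollection of $\lk_\D(F)$ to the corresponding subcollection of facets of $\D$ containing $F$, extract a leaf and joint there, and subtract $F$ (using $F\cap G_i=\emptyset$) to transfer the leaf condition back to the link. You are somewhat more careful than the paper in verifying that $F\cup G$ is actually a facet of $\D$ and in handling the degenerate cases, but the underlying argument is identical.
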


   \begin{proof} Suppose $\lk_\D(F)=\tuple{G_1,\ldots,G_s}$ where 
    $G_i$ is a subset of a facet $F_i=F\cup G$ of $\D$. Now suppose
     $\Gamma=\tuple{G_{a_1},\ldots,G_{a_r}}$ is a subcollection of
     $\lk_\D(F)$. We need to show that $\Gamma$ has a leaf. Let
     $\tuple{F_{a_1},\ldots,F_{a_r}}$ be the corresponding
     subcollection of $\D$, which must have a leaf, say $F_{a_1}$ and
     a joint, say $F_{a_2}$. Then we have $F_{a_i} \cap
     F_{a_1}\subseteq F_{a_2}$ for $i=3,\ldots,r.$ But since
     $F_{a_i}=F\cup G_{a_i}$ and $F\cap G_{a_i}=\emptyset$ for all
     $i$, we must have $G_{a_i} \cap G_{a_1}\subseteq G_{a_2}$ for
     $i=3,\ldots,r$ which means that $G_{a_1}$ is a leaf of $\Gamma$.
   \end{proof}

%\begin{theorem}[\cite{F2}~Theorem~2.9]\label{t:acyclic} Simplicial trees
%are acyclic. \end{theorem}

We will combine the above two facts with Hochster's formula for Betti
numbers of the ideal and its dual~\cite{BCP}.

\begin{theorem}[\cite{BCP}]
  Let $k$ be a field and $\D$ a simplicial complex over vertex set
  $V$. Then
\begin{align}
\beta_{i,j}(I_\D)&=\displaystyle \sum_{A\subseteq V, \ |A|=j}\dim_k \widetilde{H}_{j-i-2}(\D_A;k)\label{e:betti}\\
\beta_{i,j}(I_\D^\vee)&=\displaystyle \sum_{A\subseteq V, \ |A|=j} \dim_k \widetilde{H}_{i-1}(\lk_\D(V \sm A;k)).\label{e:d-betti}
\end{align}
\end{theorem}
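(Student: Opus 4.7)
The plan is to prove both formulas via Koszul homology, with the second following from the first by combinatorial Alexander duality.

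First, I would reduce to $R/I_\D$: the short exact sequence $0 \to I_\D \to R \to R/I_\D \to 0$ gives $\beta_{i,j}(I_\D) = \beta_{i+1,j}(R/I_\D)$ for $i \geq 0$ via the long exact sequence of $\text{Tor}$. Then I would compute $\text{Tor}_{i+1}^R(R/I_\D, k)$ using the $\mathbb{Z}^n$-graded Koszul complex $K_\bullet = K_\bullet(x_1,\ldots,x_n;R)$, a minimal free resolution of $k$ over $R$. This yields $\text{Tor}_{i+1}^R(R/I_\D, k) = H_{i+1}(R/I_\D \otimes_R K_\bullet)$ as a multigraded module.

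Next, I would exploit the multigrading. Since $R/I_\D$ is a squarefree monomial quotient, its $\text{Tor}$ modules are concentrated in squarefree multidegrees $\chi_A$ for $A \subseteq V$. In such a multidegree, $(R/I_\D \otimes K_p)_{\chi_A}$ has a $k$-basis indexed by pairs $(\tau, S)$ with $\tau \sqcup S = A$, $|S| = p$, and $\tau \in \D$ (so that $x^\tau$ survives in $R/I_\D$). Writing $\tau = A \sm S$, this set is in bijection with the faces of $\D_A$ of dimension $|A| - p - 1$. A direct sign-check identifies this strand of the Koszul complex with the augmented chain complex $\widetilde{C}_\bullet(\D_A;k)$, reindexed so that homological degree $p$ corresponds to $\widetilde{C}_{|A|-p-1}(\D_A;k)$. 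Consequently $H_{i+1}((R/I_\D \otimes K_\bullet)_{\chi_A}) \cong \widetilde{H}_{|A|-i-2}(\D_A;k)$, and summing over $A$ with $|A|=j$ yields the first formula.

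For the second formula, I would apply the first formula to $I_\D^\vee = I_{\D^\vee}$, where $\D^\vee$ is the Alexander dual complex of $\D$. A direct vertex-set calculation shows that the induced subcomplex $(\D^\vee)_A$ is the combinatorial Alexander dual of $\lk_\D(V \sm A)$ inside the full simplex on $A$. Combinatorial Alexander duality then gives $\widetilde{H}_{j-i-2}((\D^\vee)_A;k) \cong \widetilde{H}^{i-1}(\lk_\D(V \sm A);k)$, and since $k$ is a field, universal coefficients converts this to $\widetilde{H}_{i-1}(\lk_\D(V \sm A);k)$.

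The main obstacle will be the careful bookkeeping in two places: verifying that the Koszul differential restricts on the squarefree strand to the augmented simplicial boundary with matching signs, and tracking the vertex sets and dimension shifts in the Alexander duality step so that $(\D^\vee)_A$ really is combinatorially dual to $\lk_\D(V \sm A)$ on the correct ground set and the homological indices line up as $(j-i-2) + (i-1) = j-3 = |A|-3$ as required by duality inside the simplex on $A$.
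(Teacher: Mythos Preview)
The paper does not prove this theorem at all: it is quoted as a known result from \cite{BCP} (Hochster's formula and its Alexander-dual version) and used as a black box. So there is no ``paper's own proof'' to compare against.

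That said, your outline is the standard and correct argument. The Koszul-complex computation of $\mathrm{Tor}^R_\bullet(R/I_\D,k)$ in squarefree multidegree $\chi_A$ does identify, up to the reindexing you describe, with the augmented simplicial chain complex of $\D_A$, yielding~(\ref{e:betti}). For~(\ref{e:d-betti}), your key combinatorial identity is right: writing $\D^\vee=\{F\subseteq V: V\sm F\notin \D\}$ one checks that
\[
(\D^\vee)_A=\{F\subseteq A: V\sm F\notin \D\}=\{F\subseteq A: (A\sm F)\cup(V\sm A)\notin \D\},
\]
which is exactly the Alexander dual, inside the simplex on $A$, of $\lk_\D(V\sm A)=\{G\subseteq A: G\cup(V\sm A)\in\D\}$. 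Combinatorial Alexander duality on $|A|=j$ vertices then gives $\widetilde{H}_{j-i-2}((\D^\vee)_A;k)\cong\widetilde{H}^{\,i-1}(\lk_\D(V\sm A);k)$, and over a field this equals $\widetilde{H}_{i-1}$. The index check $(j-i-2)+(i-1)=j-3=|A|-3$ is exactly the one needed. The only caveat worth flagging explicitly in a write-up is the degenerate case $V\sm A\notin\D$: then $\lk_\D(V\sm A)$ is the void complex while $(\D^\vee)_A$ is the full simplex on $A$, and both contribute zero, so the formula is consistent there as well.
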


If $\D$ is a tree, the following theorem shows how to find Betti
numbers of $I_\D$, and along the way also gives a proof of the fact
that $I_\D$ has a linear resolution. This last statement is not
unknown, it follows also from Fr\"oberg's characterizations of edge
ideals with linear resolutions~\cite{Fr} along with observations
in~\cite{HHZ}, and is also proved in~\cite{CF}.

\begin{theorem} Let $\D$ be a simplicial tree with vertex set $V$. Then 
$\D$ is a flag complex, $I_\D$ has a linear resolution, and the Betti
  numbers of $I_\D$ can be computed by 
$$\beta_{i,j}(I_\D)=\left \{
\begin{array}{cl}
\displaystyle \sum_{A\subseteq V, \ |A|=j}(\mbox{number of connected components of } \D_A -1)& j=i+2
\\
0 & \mbox{otherwise.}
\end{array}
\right .
$$
\end{theorem}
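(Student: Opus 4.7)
\emph{Plan.} The strategy is to feed Hochster's formula~\eqref{e:betti} with the following topological key fact: any simplicial forest $\Gamma$ has the homotopy type of a disjoint union of points, one per connected component. In particular $\widetilde H_i(\Gamma;k)=0$ for $i\ge 1$ and $\dim_k\widetilde H_0(\Gamma;k)=c(\Gamma)-1$, where $c(\Gamma)$ is the number of connected components. Combined with Theorem~\ref{t:induced}, which says $\D_A$ is a forest for every $A\seq V$, this determines every summand appearing in Hochster's formula for $\beta_{i,j}(I_\D)$.

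I would establish the key fact by induction on the number of vertices. Since each connected component of a forest is itself a tree (every subcollection of a component is a subcollection of the ambient forest, so has a leaf), it suffices to show that every tree is contractible. If the tree $\D$ has a single facet it is a simplex, so contractible. Otherwise pick a leaf $F$ of $\D$ with joint $G$ and a vertex $v\in F\sm G$, which exists because the facet $F$ is not contained in the facet $G$. The joint property forces $v$ to lie in the unique facet $F$: any other facet $H$ containing $v$ would give $v\in F\cap H\seq G$, a contradiction. Therefore the closed star of $v$ in $\D$ is the simplex on $F$ and the link of $v$ is the simplex on $F\sm\{v\}$, both contractible. Writing $\D=\D'\cup\overline{\mathrm{star}}(v)$ with $\D'=\D_{V\sm\{v\}}$ and $\D'\cap\overline{\mathrm{star}}(v)=\lk_\D(v)$, the standard gluing lemma yields a homotopy equivalence $\D\simeq\D'$. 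By Theorem~\ref{t:induced} the complex $\D'$ is again a forest with fewer vertices, and it remains connected because no facet of a connected tree with at least two facets can be a single vertex, so the induction closes.

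Having the key fact, the three assertions of the theorem follow quickly. If $A\seq V$ were a minimal non-face of $\D$ with $|A|\ge 3$, then every proper subset of $A$ would lie in $\D$, so $\D_A$ would be the boundary of the simplex on $A$, a topological sphere with $\widetilde H_{|A|-2}(\D_A;k)=k$ and $|A|-2\ge 1$; but $\D_A$ is a forest, contradicting the key fact. Hence every minimal non-face has cardinality two, so $\D$ is flag and $I_\D$ is generated in degree two. Plugging into Hochster~\eqref{e:betti}, the summand at a subset $A$ of size $j$ is $\dim_k\widetilde H_{j-i-2}(\D_A;k)$, which by the key fact vanishes unless $j=i+2$ and otherwise equals $c(\D_A)-1$; summing over $A$ yields the stated closed form. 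The vanishing off $j=i+2$ together with degree-two generation is precisely linearity of the resolution.

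The substantive work is the topological key fact; the delicate points there are verifying that removing the free vertex $v$ preserves connectivity of the tree and that the star--link decomposition genuinely produces $\D\simeq\D'$. Everything else is a mechanical application of Hochster's formula.
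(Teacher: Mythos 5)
Your argument is correct and follows the same skeleton as the paper's proof: apply Hochster's formula~\eqref{e:betti}, use Theorem~\ref{t:induced} to see that each $\D_A$ is a forest, and then invoke acyclicity of trees to kill all reduced homology except $\widetilde H_0$, whose dimension is the number of components minus one. The one substantive difference is that the paper simply cites the acyclicity of simplicial trees (Theorem~2.9 of~[F2]), whereas you prove the stronger statement that a tree is contractible, by induction on the number of vertices: choose a leaf $F$ with joint $G$, observe that any $v\in F\sm G$ lies in no facet other than $F$, and collapse the closed star of $v$ (a simplex) onto its link (also a simplex) via the gluing lemma to get $\D\simeq\D_{V\sm\{v\}}$. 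This makes the note self-contained and is a clean, correct argument; it also gives you a direct proof of flagness via minimal non-faces (if $|A|\geq 3$ were a minimal non-face, $\D_A$ would be a sphere of positive dimension, impossible for a forest), where the paper instead reads flagness off the computed $\beta_0$. Two small points to tighten: (i) your justification that $\D_{V\sm\{v\}}$ stays connected only establishes $F\sm\{v\}\neq\emptyset$; either add the rerouting of paths through $v$ via edges of $F$, or simply note that connectedness is a homotopy invariant and $\D\simeq\D_{V\sm\{v\}}$ with $\D$ connected; (ii) for completeness one should remark that $\widetilde H_{-1}$ never contributes in~\eqref{e:betti} for $j\geq 1$, since $\D_A$ contains every vertex of $A$ and is therefore nonempty --- your phrasing ``homotopy type of a disjoint union of points'' covers this, but it is worth making explicit.
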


  \begin{proof} By (\ref{e:betti}) we know that we are looking at the 
    reduced homology modules of $\D_A$ for various $A\subseteq V$. For
    a given $A$, we know that $\D_A$ is a forest, and every connected
    component is a tree and therefore
    acyclic~(\cite{F2}~Theorem~2.9). Therefore, for each such $A$ the
    only possible nonzero reduced homology is the $0$th one, that is
    when $|A|-i-2=0$ or $|A|=i+2$. The formula now just follows.
    
    In particular, $\beta_0$ is only positive in degree 2, which
    implies that $\D$ is a flag complex, and the fact that the
    resolution is linear is evident  from the way the Betti numbers grow.
  \end{proof}

\begin{theorem} Let $\D$ be a simplicial tree with vertex set $V$ of 
cardinality $n$. Then the $I_\D^\vee$ has projective dimension 1, and
its Betti numbers are
$$\beta_{i,j}(I_\D^\vee)=\left \{
\begin{array}{ll}
\hspace{.2in}\mbox{number of facets of $\D$ of cardinality } n-j & i=0\\
&\\
\displaystyle  \sum_{A\subseteq V, \ |A|=j} \hspace{-.2in}(\mbox{number of connected components of } \lk_\D(V\sm A) -1) & i=1\\
&\\
\hspace{.2in}0 & \mbox{otherwise.}
\end{array}
\right .
$$
\end{theorem}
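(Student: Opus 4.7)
The plan is to apply Hochster's formula~(\ref{e:d-betti}) and analyze each $\widetilde{H}_{i-1}(\lk_\D(V\sm A);k)$ using the fact that $\lk_\D(V\sm A)$ is a forest whenever $V\sm A$ is a face of $\D$ (by Lemma~\ref{l:links}), and is the void complex otherwise. In either case, only very limited homology can survive, and the three cases of the formula correspond to $i\geq 2$, $i=1$, and $i=0$.

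First I would handle the two easy cases. For $i\geq 2$, every $\lk_\D(V\sm A)$ is either void or a forest whose connected components are trees; by Theorem~2.9 of~\cite{F2} each tree is acyclic, so $\widetilde{H}_{i-1}(\lk_\D(V\sm A);k)=0$ for $i-1\geq 1$. This simultaneously gives the vanishing in the third branch of the formula and shows that $\mbox{proj dim}(I_\D^\vee)=1$. For $i=1$, $\widetilde{H}_0$ of a simplicial complex has dimension one less than its number of connected components, so the formula in the $i=1$ line of the theorem follows immediately from~(\ref{e:d-betti}), with the convention that the void complex contributes $0$.

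The case $i=0$ is the main conceptual step and is the only one that is not a direct application of the link lemma. Here I would use the standard fact that $\widetilde{H}_{-1}(\Gamma;k)\neq 0$ precisely when $\Gamma=\{\emptyset\}$, in which case it equals $k$; equivalently, $\Gamma$ has $\emptyset$ as a face but no vertices. Applied to $\Gamma=\lk_\D(V\sm A)$, this says $\emptyset\in\lk_\D(V\sm A)$, i.e., $V\sm A\in\D$, while there is no $v\notin V\sm A$ with $(V\sm A)\cup\{v\}\in\D$. These two conditions together are exactly the statement that $V\sm A$ is a facet of $\D$. Therefore the sum in~(\ref{e:d-betti}) collapses to a count of subsets $A\subseteq V$ with $|A|=j$ such that $V\sm A$ is a facet of $\D$, which is precisely the number of facets of $\D$ of cardinality $n-j$.

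The hardest (or at least most delicate) part of the argument is the $i=0$ case, because it depends on carefully matching the subsets $V\sm A$ with facets of $\D$ via the behaviour of reduced $(-1)$-homology, rather than on the tree/forest structure itself; the other two cases fall out directly from Lemma~\ref{l:links} and the acyclicity of trees.
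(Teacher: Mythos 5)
Your proposal is correct and follows essentially the same route as the paper: apply Hochster's dual formula~(\ref{e:d-betti}), use Lemma~\ref{l:links} together with the acyclicity of trees to kill all homology above degree $0$, read off $i=1$ from $\widetilde{H}_0$, and identify the $i=0$ contributions with the facets via $\lk_\D(V\sm A)=\{\emptyset\}$. The only difference is that you are more explicit than the paper about the void-complex convention and the characterization of $\widetilde{H}_{-1}$, which is a welcome clarification rather than a departure.
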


  \begin{proof} This follows from (\ref{e:d-betti}). Note that in this case
   we are looking at the homology modules of $\lk_\D(V\sm A)$ for
   $A\subseteq V$. By Lemma~\ref{l:links} $\lk_\D(V\sm A)$ is a forest,
   and so since all the connected components are acyclic, we
   only have possible homology in degrees -1 (if the link is empty)
   and 0.  
   
   The case $i=1$ is the $0$th homology, and we are counting the
   numbers of connected components minus 1, which  is straightforward.

    In the case $i=0$, we are counting only those $A\subset V$ where
    $\lk_\D(V\sm A)=\{\emptyset\}$, or equivalently $V\sm A$ is
    a facet of $\D$. So the formula for the case $i=0$ follows. 
  \end{proof}

%%%%%%%%%%%%%%%%%%%%%%%%%%%%%%%%%%%%%%%%%%%%%%%%%%%%%%%%%%%%
%%%%%%%%%%%%%%%%%%%%%%%%%%%%%%%%%%%%%%%%%%%%%%%%%%%%%%%%%%%%

\end{document}